\documentclass[openany,a4paper,12pt]{article}
\usepackage[utf8x]{inputenc}
\usepackage[T1]{fontenc}
\usepackage[english]{babel}
\usepackage[intlimits]{amsmath}
\usepackage[]{graphicx}
\usepackage{color}
\usepackage{hyperref} 
\usepackage{amssymb}
\usepackage{amsmath}
\usepackage{array}
\usepackage{geometry}
\geometry{a4paper,left=2cm,right=2cm,top=2cm,bottom=2cm}

\usepackage{pdfpages}

\newcommand{\K}{\mathbb{K}}
\newcommand{\C}{\mathbb{C}}

\newtheorem{defi}{Definition}
\newtheorem{con}[defi]{Conjecture}
\newtheorem{teor}[defi]{Theorem}
\newtheorem{lema}[defi]{Lemma}

\newenvironment{proof}{{\it Proof:}}{\hspace{\stretch{1}}\rule{1ex}{1ex}}

\usepackage{titlesec}

\titleformat*{\section}{\normalsize\bfseries}
\titleformat*{\subsection}{\normalsize\bfseries}
\titleformat*{\subsubsection}{\normalsize\bfseries}
\titleformat*{\paragraph}{\normalsize\bfseries}
\titleformat*{\subparagraph}{\normalsize\bfseries}

\usepackage{setspace}
\setstretch{1.5}

\begin{document}

\begin{center}
{\normalsize \bf THE IMAGES OF MULTILINEAR POLYNOMIALS ON STRICTLY UPPER TRIANGULAR MATRICES}\\
{\normalsize Pedro S. Fagundes}\footnote{pedrosfmath@gmail.com\\ Universidade Federal de São Paulo, Instituto de Ciência de Tecnologia, SP, Brazil}
\end{center}

\begin{abstract}
The purpose of this paper is to describe the images of multilinear polynomials of arbitrary degree on the strictly upper triangular matrix algebra. 

\noindent
{\bf Key words:} Lvov-Kaplansky conjecture, multilinear polynomials, strictly upper triangular matrices.
\end{abstract}

\section{Introduction}
Let $\K$ be any field and let $\K\langle X \rangle$ be the free associative algebra over $\K$, freely generated by  the countable set $X=\{x_{1},x_{2},\dots\}$ of noncommuting variables.

Our main motivation is an old problem due to Lvov \cite{Dniester} (which is also attributed to Kaplansky \cite{Kanel2}):

\begin{con}\label{c1}
The image of a multilinear polynomial in $\K\langle X \rangle$ on the matrix algebra $M_{n}(\K)$ is a vector space. 
\end{con}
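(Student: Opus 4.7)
The plan is to follow the strategy initiated by Kanel-Belov, Malev and Rowen in the $n=2$ case: extract the natural symmetries of $\mathrm{Im}(p)$, use them to pin down a short list of candidate subspaces, and then verify that the image coincides with one of these candidates. I would organise the argument in three steps.

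First, observe that $\mathrm{Im}(p)$ is closed under $\mathrm{GL}_n(\K)$-conjugation and under scalar multiplication by $\K$. Conjugation invariance follows from $p(g A_1 g^{-1}, \dots, g A_m g^{-1}) = g\, p(A_1,\dots,A_m)\, g^{-1}$, and scalar-invariance from multilinearity: $p(\lambda A_1, A_2, \dots, A_m) = \lambda\, p(A_1,\dots,A_m)$. Hence $\mathrm{Im}(p)$ is a union of similarity orbits and a cone through the origin, which already forces it to be a union of the four natural $\mathrm{GL}_n$-stable cones.

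Second, pass to invariants. Over an algebraically closed $\K$, a conjugation-invariant constructible subset of $M_n(\K)$ is determined by the values of the characteristic invariants $\mathrm{tr}(A), \mathrm{tr}(A^2), \dots, \mathrm{tr}(A^n)$. I would study the composite maps $\mathrm{tr}(p^j) \colon M_n(\K)^m \to \K$, which are homogeneous polynomials in the entries, and combine Zariski density with the cone property to show that the closure of $\mathrm{Im}(p)$ is one of the four natural conjugation-invariant linear subspaces: $\{0\}$, $\K \cdot I$, $\mathfrak{sl}_n(\K)$, or $M_n(\K)$. Third, one closes the gap between the image and its Zariski closure: invoking the Shoda–Albert–Muckenhoupt theorem (every trace-zero matrix is a commutator) handles the $\mathfrak{sl}_n$ case, and specialisation arguments using elementary matrices cover the remaining cases.

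The main obstacle, and the reason the conjecture remains open for $n \geq 3$, lies in the last step: showing that the image is not merely dense in the candidate subspace but actually fills it. For $n=2$ the Cayley–Hamilton relation $A^2 = \mathrm{tr}(A) A - \det(A) I$ reduces arbitrary words in $2 \times 2$ matrices to degree-one expressions, which makes surjectivity onto a candidate subspace a finite check; for larger $n$ no such collapse is available, and the combinatorial explosion of monomials in $p$ blocks a uniform argument. Moreover, even the dichotomy in Step two requires ruling out conjugation-invariant cones that are neither linear nor dense in $M_n(\K)$, such as the cone of non-derogatory matrices, which is a nontrivial exercise in invariant theory. It is precisely this obstacle that motivates the present paper's restriction to the strictly upper triangular subalgebra, where the nilpotent structure forces a rigidity that does permit an explicit combinatorial description of the image.
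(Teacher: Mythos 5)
This statement is Conjecture \ref{c1}, the Lvov--Kaplansky conjecture itself: the paper does not prove it (it is stated precisely as an open problem motivating the work), and your proposal does not prove it either. Indeed, your own text concedes the decisive point when you describe ``the main obstacle, and the reason the conjecture remains open for $n\geq 3$.'' A roadmap that ends by identifying why the final step cannot currently be carried out is not a proof, so there is a genuine gap --- in fact the gap is exactly the open content of the conjecture.

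Beyond that global issue, several intermediate claims are themselves unjustified. From conjugation-invariance and the cone property you infer that $\mathrm{Im}(p)$ ``is a union of the four natural $\mathrm{GL}_n$-stable cones''; this does not follow, since there are many conjugation-invariant cones in $M_n(\K)$ that are not among $\{0\}$, $\K\cdot I$, $sl_n(\K)$, $M_n(\K)$ (you even name one such obstruction yourself). The trace invariants $\mathrm{tr}(A),\dots,\mathrm{tr}(A^n)$ separate closed orbits but do not determine an arbitrary constructible conjugation-invariant set, so the reduction in your second step is not available as stated. And the Shoda--Albert--Muckenhoupt theorem concerns the specific polynomial $xy-yx$; it shows every trace-zero matrix is a commutator but says nothing about an arbitrary multilinear $p$ whose image happens to be dense in $sl_n(\K)$. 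What the paper actually does is sidestep all of this: it replaces $M_n(\K)$ by the nilpotent algebra $UT_n^{(0)}$, where a completely explicit, elementary argument (Lemma \ref{l1} plus a recursive solution of triangular linear systems along each diagonal) yields the full image $UT_n^{(m-1)}$ without any invariant-theoretic machinery. If you want to contribute to this circle of problems, Theorem \ref{t1} --- not Conjecture \ref{c1} --- is the statement for which a complete proof is within reach.
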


This conjecture was inspired by classical results due to Shoda \cite{Shoda} and Albert and Muckenhoupt  \cite{Albert} where it was verified for polynomials of degree two. 

In case of multilinear polynomials of degrre three over the complex number field $\C$, Dykema and Klep \cite{Dykema} verified Conjecture \ref{c1} when $n$ is even or $n<17$.

In 2013, Mesyan  \cite{Mesyan} found an important relation between images of multilinear polynomials $f\in\K\langle X \rangle$ of degree three on $M_{n}(\K)$ and the traceless matrix algebra $sl_{n}(\K)$. He showed, under some mild condition on $\K$, that $sl_{n}(\K)\subset f(M_{n}(\K))$, where $f(M_{n}(\K))$ denotes the image of $f$ on $M_{n}(\K)$. In his paper, Mesyan posed the following problem.

\begin{con}\label{c2}
Let $n\geq2$ and $m\geq1$ be integers, let $f(x_{1},\dots,x_{m})\in \K\langle X \rangle$ be a nonzero multilinear polynomial and let $n\geq m-1$. Then $sl_{n}(\K)\subset f(M_{n}(\K))$.
\end{con}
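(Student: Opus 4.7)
The plan is to proceed by induction on the degree $m$ of the polynomial $f$. The base cases are $m=1$ (where $f = \alpha x_1$ with $\alpha \neq 0$, so $f(M_n(\K)) = M_n(\K) \supset sl_n(\K)$), $m=2$ (Shoda's theorem, giving that every traceless matrix is a commutator), and $m=3$ (Mesyan's theorem quoted in the introduction). The side condition $n \geq m-1$ is preserved in the descent, since if $n \geq m-1$ then $n \geq (m-1)-1$ as well.

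Before the induction step, I record the general principle that $f(M_n(\K))$ is closed under $GL_n(\K)$-conjugation, because by multilinearity $f(gA_1 g^{-1}, \ldots, gA_m g^{-1}) = g\, f(A_1, \ldots, A_m)\, g^{-1}$. This reduces the final inclusion to realizing one representative of each conjugacy class of $sl_n(\K)$ in the image.

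For the inductive step with $m \geq 4$, the first attempt is specialization. For each index $i$ set
\[
g_i(x_1, \ldots, \widehat{x_i}, \ldots, x_m) := f(x_1, \ldots, x_{i-1}, I, x_{i+1}, \ldots, x_m),
\]
a multilinear polynomial of degree $m-1$. If some $g_i$ is nonzero as an element of $\K\langle X \rangle$, the inductive hypothesis applied to $g_i$ gives $sl_n(\K) \subset g_i(M_n(\K)) \subset f(M_n(\K))$, and the step is complete.

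The \emph{main obstacle} is the case in which $g_i$ vanishes identically for every $i$. This is a strong algebraic condition on the coefficients $\alpha_\sigma$ of $f$, already exhibited by the standard commutator $f = [x_1, x_2]$. In this situation I would attempt richer specializations: replace identities by idempotent diagonal matrices $e_{jj}$, or by nilpotent matrix units $e_{jk}$ with $j \neq k$, in order to produce a nonzero multilinear polynomial of lower degree (possibly on a subalgebra, after a change of variables). Here the hypothesis $n \geq m-1$ should become essential, because it provides enough diagonal slots in $M_n(\K)$ to carry out $m-1$ independent idempotent substitutions. For polynomials that resist even these substitutions, the fallback is a direct construction: exhibit explicit $A_1, \ldots, A_m$ realizing a single full-rank nilpotent Jordan block as $f(A_1, \ldots, A_m)$, and then use conjugation-invariance together with Shoda's theorem to sweep through the remaining conjugacy classes in $sl_n(\K)$. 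Devising these explicit matrices uniformly over all multilinear $f$ is what I expect to cost the most effort, and is precisely the point where the sharp bound $n \geq m-1$ should earn its keep.
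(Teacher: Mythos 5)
The statement you are trying to prove is Conjecture \ref{c2} --- Mesyan's conjecture --- which the paper records as an \emph{open problem}, not a theorem: no proof appears in the paper, and only special cases are known (degree $\leq 3$ by Mesyan under a mild hypothesis on $\K$, degree $4$ by Buzinski and Winstanley over algebraically closed fields of characteristic zero). So there is no ``paper's proof'' to compare against, and a complete argument here would be a genuine research contribution rather than an exercise.

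As for the proposal itself, it is a strategy outline rather than a proof, and the gap sits exactly at the crux. The substitution $x_i \mapsto I$ producing $g_i$ is fine when some $g_i \neq 0$ (and the bookkeeping $n \geq m-1 \Rightarrow n \geq (m-1)-1$ is correct), but the case where \emph{every} $g_i$ vanishes is not a degenerate corner: it contains the commutator $[x_1,x_2]$, all products of commutators, and the standard polynomials of even degree --- that is, essentially all the polynomials for which the conjecture is interesting. For these you offer only intentions (``I would attempt richer specializations,'' ``the fallback is a direct construction''), with no construction actually exhibited; substituting $e_{jj}$ or $e_{jk}$ for a variable destroys multilinearity in a way that does not obviously hand you a lower-degree multilinear polynomial to which the inductive hypothesis applies, and no mechanism is given by which $n \geq m-1$ ``earns its keep.'' Note also that even your base case $m=3$ is not unconditional: Mesyan's theorem carries a hypothesis on $\K$, so the induction would not start for arbitrary fields as the conjecture demands. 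In short, the proposal correctly identifies where the difficulty lies but does not resolve it; the conjecture remains open after your argument just as before it.
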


In 2013 the Mesyan's Conjecture was positively answered by Buzinski and Winstanley  \cite{Buzinski} for polynomials of degree four over algebraically closed fields of characteristic zero.

A major breakthrough in Conjecture \ref{c1} was done in 2012 by Kanel-Belov, Malev and Rowen  \cite{Kanel2}, when they solved it for $n=2$, in case of quadratically closed field. Some further developments of their approach can be found in \cite{Kanel3, Kanel4, Kanel5,Malev}.

In attempt of approaching the Lvov-Kaplansky conjecture, some variations of it have been studied. For example, the images of multilinear polynomials of small degree on Lie Algebras  (\cite{Anzis}, \cite{Spela}), Jordan Algebras \cite{Ma} and on the upper triangular matrix algebra  \cite{Fagundes} were described.

The main goal of this paper is to discuss another variation of Conjecture \ref{c1}, namely, the description of the image of a multilinear polynomial on strictly upper triangular matrices. But before the statement of the main theorem, we introduce some notations.

From now on, $\K$ will denote an arbitrary field. For each $n\geq2$ and $m\geq1$, we will denote by $UT_{n}^{(m-1)}(\K)$ (or simply by $UT_{n}^{(m-1)}$) the subalgebra of the upper triangular matrix algebra $UT_{n}(\K)$ whose $(p,q)$ entry is zero when $q-p\leq m-1$. In other words, the matrices in $UT_{n}^{(m-1)}$ are such that the $m$ first diagonals are all null. We note that for $m=1$, $UT_{n}^{(0)}$ is the strictly upper triangular matrix algebra.
 
Hence, our theorem is

\begin{teor}\label{t1}
Let $\K$ be any field, let $n\geq2$ and $m\geq1$ be integers. Let $f(x_{1},\dots,x_{m})\in\K\langle X \rangle$ be a nonzero multilinear polynomial. Then the image of $f$ on $UT_{n}^{(0)}$ is either $\{0\}$ or $UT_{n}^{(m-1)}$.
\end{teor}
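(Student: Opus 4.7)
The plan is to prove two containments: $f(UT_n^{(0)}) \subseteq UT_n^{(m-1)}$ in all cases, and $f(UT_n^{(0)}) \supseteq UT_n^{(m-1)}$ whenever the image is not $\{0\}$. The first containment is immediate from the fact that $(UT_n^{(0)})^m \subseteq UT_n^{(m-1)}$ (any product of $m$ strictly upper triangular matrices has zero entries on its first $m$ diagonals) together with the multilinearity of $f$. If the image is $\{0\}$ (which happens in particular when $n \leq m$, since then $UT_n^{(m-1)} = \{0\}$), there is nothing more to prove. Otherwise some coefficient $a_{\sigma_0}$ of $f$ is nonzero, and the substitution $y_i = x_{\sigma_0(i)}$ preserves the image while making the coefficient of $y_1 y_2 \cdots y_m$ in the rewritten polynomial equal to $a_{\sigma_0} \neq 0$; so we may assume $a_{\mathrm{id}} \neq 0$.

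For the reverse inclusion I would proceed by induction on $n$, using the block decomposition
\[
A_k = \begin{pmatrix} 0 & v_k \\ 0 & B_k \end{pmatrix},
\]
where $v_k$ is a row vector of length $n-1$ and $B_k \in UT_{n-1}^{(0)}$. A direct computation gives
\[
f(A_1, \ldots, A_m) = \begin{pmatrix} 0 & \sum_{\mu=1}^m v_\mu \, D_\mu \\ 0 & f(B_1, \ldots, B_m) \end{pmatrix},
\]
with $D_\mu := \sum_{\sigma \in S_m,\, \sigma(1) = \mu} a_\sigma B_{\sigma(2)} \cdots B_{\sigma(m)} \in UT_{n-1}^{(m-2)}$. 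The base case $n = m+1$ reduces to showing $e_{1,m+1}$ lies in the image, which follows from the substitution $x_k = e_{k,k+1}$: only $\sigma = \mathrm{id}$ contributes, yielding $a_{\mathrm{id}} e_{1, m+1}$. For the inductive step, given a target $M$ with block form $\begin{pmatrix} 0 & w \\ 0 & M' \end{pmatrix}$, the inductive hypothesis supplies $B_k$'s realizing $M'$, after which the $v_\mu$'s need to be chosen so that $\sum_\mu v_\mu D_\mu = w$.

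The main obstacle is precisely the solvability of this last linear system, which amounts to showing that $w$ lies in $\sum_\mu \mathrm{rowspan}(D_\mu)$. The crucial observation is that $D_1 = \tilde f(B_2, \ldots, B_m)$, where $\tilde f$ is the multilinear polynomial of degree $m-1$ obtained from the $\sigma \in S_m$ that fix $1$; the identity coefficient of $\tilde f$ is again $a_{\mathrm{id}} \neq 0$. A parallel induction on $m$ applied to $\tilde f$ on $UT_{n-1}^{(0)}$ then says that $D_1$ can be made to equal any prescribed element of $UT_{n-1}^{(m-2)}$, which should supply enough freedom to guarantee $w \in \sum_\mu \mathrm{rowspan}(D_\mu)$. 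The delicate step is reconciling this with the simultaneous requirement $f(B_1, \ldots, B_m) = M'$, handled via a careful joint induction on $n$ and $m$ that exploits the assumption $a_{\mathrm{id}} \neq 0$.
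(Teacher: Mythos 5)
Your first containment, the reduction to $\lambda_{\mathrm{id}}\neq 0$, the block computation
$f(A_1,\dots,A_m)=\left(\begin{smallmatrix}0 & \sum_\mu v_\mu D_\mu\\ 0 & f(B_1,\dots,B_m)\end{smallmatrix}\right)$
with $D_\mu=\sum_{\sigma(1)=\mu}a_\sigma B_{\sigma(2)}\cdots B_{\sigma(m)}$, and the base case $n=m+1$ are all correct. But the proof has a genuine gap exactly where you flag ``the delicate step'': you never show that the two requirements --- $f(B_1,\dots,B_m)=M'$ and $w\in\sum_\mu\operatorname{rowspan}(D_\mu)$ --- can be met by the \emph{same} tuple $(B_1,\dots,B_m)$. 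The inductive hypothesis on $n$ only produces \emph{some} $B_k$'s realizing $M'$, and these may make every $D_\mu$ degenerate: for instance if $M'=0$ one could take all $B_k=0$, whence all $D_\mu=0$ and no nonzero $w$ is reachable. Conversely, the ``parallel induction on $m$'' applied to $\tilde f$ produces $B_2,\dots,B_m$ with $\tilde f(B_2,\dots,B_m)$ equal to a prescribed full-row-span target, but then realizing $M'$ forces you to do it by varying $B_1$ alone, and the surjectivity of the linear map $B_1\mapsto f(B_1,B_2,\dots,B_m)$ for those \emph{particular} $B_2,\dots,B_m$ is not supplied by any of your inductive hypotheses (this map is a general sandwich map $\sum_j(\cdot)B_1(\cdot)$, not left multiplication by $D_1$, so nondegeneracy of $D_1$ alone does not give it). The phrase ``careful joint induction on $n$ and $m$'' is naming the problem, not solving it.

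This missing ingredient is precisely what the paper's Lemma 1 provides: it exhibits a single choice of $\bar x_2,\dots,\bar x_m$, supported on the first superdiagonal, for which all the coefficient sums $\sum_{\sigma\in S_m^{(1)}}\lambda_\sigma\alpha_{k+1}^{(\sigma(2))}\cdots\alpha_{k+m-1}^{(\sigma(m))}$ are simultaneously nonzero; these are exactly the entries of your $D_1=\tilde f(\bar x_2,\dots,\bar x_m)$, and their nonvanishing makes the system for the entries of $x_1$ banded-triangular with nonzero leading coefficients, hence solvable diagonal by diagonal. Establishing that simultaneous nonvanishing is a nontrivial iterative argument over the subgroups $G^{(j)}$, not a formal consequence of induction. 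If you want to salvage your row-peeling induction, you would still need to prove a statement of this strength, so the approach does not actually bypass the paper's key lemma.
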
 

We can assume that $m\geq2$, because for polynomials of degree 1 the statement is obvious.

Writing $$f(x_{1},\dots,x_{m})=\displaystyle\sum_{\sigma\in S_{m}}\lambda_{\sigma}x_{\sigma(1)}\cdots x_{\sigma(m)},$$
we will also assume, without loss of generality, that $\lambda_{id}=1$, where $S_{m}$ is the symmetric group of the set $\{1,\dots,m\}$ and $\lambda_{\sigma}\in\K$.

Since $UT_{n}^{(0)}$ is a nilpotent algebra of index $n$, any nonzero multilinear polynomial $f(x_{1},\dots,x_{m})$ is a polynomial identity for $UT_{n}^{(0)}$, when $m\geq n.$

Observe that $f(x_{1},\dots,x_{m})$ is not a polynomial identity when $n>m$, since replacing $x_{j}$ by $e_{j,j+1}$ we have 
\begin{eqnarray}\nonumber
f(e_{1,2},\dots, e_{m,m+1})=e_{1,m+1}\neq0.
\end{eqnarray}

Before the proof of Theorem \ref{t1}, we state some technical results.

\section{Some technical lemmas}

Let $Y=\{y_{k}^{(l)}|k,l\in\{1,\dots,n\}\}$ be a set of commuting variables. 
It is convenient for us to use both subscript and superscript indices for these
variables because it will be easier to see how we act on the superscript indices by permutations. 

Let $\K[Y]$ be the algebra of polynomials on $Y$ over a field $\K$.

For each $j\in\{1,\dots,m\}$, we will consider the following notations: 
$$S_{m}^{(j)}=\{\sigma\in S_{m}|\sigma(j)=j\} \ \mbox{and} \ G^{(j)}=S_{m}^{(1)}\cap S_{m}^{(j)}\cap S_{m}^{(j+1)}\cap\dots\cap S_{m}^{(m)}.$$
We will also denote $S_{m}^{(1)}$ by $G^{(m+1)}$.

The main goal of this section is to prove the next lemma, which plays a key role in the proof of Theorem \ref{t1}.

\begin{lema}\label{l1}
Let $\sigma\in S_{m}^{(1)}$ and $\lambda_{\sigma}\in\K$ where $\lambda_{id}=1$. Then we can replace the variables $y_{2}^{(2)},\dots,y_{n-1}^{(2)},\dots,y_{2}^{(m)},\dots,y_{n-1}^{(m)}$ by scalars in $\K$ such that all the following polynomials
\begin{eqnarray}\label{a4}
\left\{\begin{array}{c}
\displaystyle\sum_{\sigma\in S_{m}^{(1)}}\lambda_{\sigma}y_{2}^{(\sigma(2))}\cdots y_{m}^{(\sigma(m))}\\
\displaystyle\sum_{\sigma\in S_{m}^{(1)}}\lambda_{\sigma}y_{3}^{(\sigma(2))}\cdots y_{m+1}^{(\sigma(m))}\\
\vdots \\
\displaystyle\sum_{\sigma\in S_{m}^{(1)}}\lambda_{\sigma}y_{n-m+1}^{(\sigma(2))}\cdots y_{n-1}^{(\sigma(m))}
\end{array}\right.
\end{eqnarray}
take some nonzero values in $\K$.
\end{lema}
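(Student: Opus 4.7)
Write the $i$-th polynomial in (\ref{a4}) as
$$P_i = \sum_{\sigma \in S_m^{(1)}} \lambda_\sigma \, y_i^{(\sigma(2))} y_{i+1}^{(\sigma(3))} \cdots y_{i+m-2}^{(\sigma(m))}, \qquad i = 2, \ldots, n-m+1.$$
The goal is to specialize the $y_k^{(l)}$ with $k \in \{2,\dots,n-1\}$, $l \in \{2,\dots,m\}$ to scalars so that every $P_i$ becomes nonzero. My approach has three main steps.

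\emph{Nonvanishing as a polynomial.} A permutation in $S_m^{(1)}$ is determined by its restriction to $\{2,\dots,m\}$, so distinct $\sigma$'s produce distinct monomials in $P_i$ and no cancellations occur. In particular, the contribution of $\sigma = \mathrm{id}$ is the monomial $y_i^{(2)}y_{i+1}^{(3)}\cdots y_{i+m-2}^{(m)}$ with coefficient $\lambda_{\mathrm{id}} = 1$, so each $P_i$ is a nonzero multilinear polynomial in the $y_k^{(l)}$.

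\emph{Uniform substitution.} Setting $y_k^{(l)} = 1$ for all $k,l$ collapses every monomial to $1$, and each $P_i$ becomes the scalar $C := \sum_{\sigma \in S_m^{(1)}} \lambda_\sigma$. When $C \neq 0$, a single substitution handles all the polynomials simultaneously. Over an infinite field the case $C = 0$ is also easy: the product $\prod_i P_i$ is a nonzero element of $\K[Y]$, and any nonzero polynomial over an infinite field has a nonzero evaluation.

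\emph{Arbitrary field, $C = 0$.} I would argue by induction on $n - m$. The base case $n = m+1$ reduces to the single nonzero multilinear polynomial $P_2$, which takes a nonzero value on $\{0,1\}^N$ via Möbius inversion on the Boolean lattice. For the inductive step, observe that the variables $y_{n-1}^{(l)}$ appear only in $P_{n-m+1}$, which can be written
$$P_{n-m+1} = \sum_{l=2}^{m} y_{n-1}^{(l)} R^{(l)}, \quad R^{(l)} = \sum_{\substack{\sigma \in S_m^{(1)} \\ \sigma(m) = l}} \lambda_\sigma \, y_{n-m+1}^{(\sigma(2))} \cdots y_{n-2}^{(\sigma(m-1))}.$$
Given a specialization of $y_k^{(l)}$ for $k \leq n-2$ making $P_2,\dots,P_{n-m}$ nonzero and at least one $R^{(l)}$ nonzero, we extend by setting $y_{n-1}^{(l)} = 1$ and the remaining $y_{n-1}^{(l')} = 0$; this leaves $P_2,\dots,P_{n-m}$ untouched and yields $P_{n-m+1} = R^{(l)} \neq 0$.

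\emph{Main obstacle.} The delicate point is the simultaneous control in the inductive step: not only must $P_2,\dots,P_{n-m}$ be nonzero after substitution, but also at least one of the scalars $R^{(l)}$. Note that $R^{(m)}$ has exactly the structural shape of the $P$-polynomials but with $m$ replaced by $m-1$ (the permutations with $\sigma(1) = 1$ and $\sigma(m) = m$ give a copy of $S_{m-1}^{(1)}$, and the identity contribution still has coefficient $1$), so $R^{(m)}$ is itself a nonzero polynomial. This suggests strengthening the inductive hypothesis to the simultaneous nonvanishing of a larger compatible family that includes both the primary polynomials $P_i$ and the auxiliary polynomials $R^{(l)}$. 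Formulating this strengthened statement and verifying its stability under the inductive step, for every admissible parameter $m$, is where the bulk of the combinatorial work lies.
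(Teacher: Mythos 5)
Your reductions are correct as far as they go: the no--cancellation observation, the case $\sum_{\sigma}\lambda_{\sigma}\neq 0$, and the infinite-field case are all fine, and the induction on $n-m$ with the splitting $P_{n-m+1}=\sum_{l}y_{n-1}^{(l)}R^{(l)}$ is correctly set up. But the argument has a genuine gap, and you have located it yourself: everything hinges on producing a single specialization of the variables with subscript at most $n-2$ that makes $P_{2},\dots,P_{n-m}$ nonzero \emph{and} makes at least one $R^{(l)}$ nonzero, and you leave the required strengthened inductive hypothesis unformulated. This is not a routine detail to be deferred; over a small finite field it is the entire content of the lemma, since simultaneous nonvanishing of several polynomials cannot be deduced from their individual nonvanishing (already $y$ and $y+1$ over $\mathbb{F}_{2}$ have no common nonvanishing point). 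Some explicit, compatible scheme of substitutions must be exhibited, and your sketch stops exactly where that scheme would have to appear.

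For comparison, the paper resolves this by peeling in the other direction: instead of isolating the largest \emph{subscript} $n-1$, it isolates the largest \emph{superscript}, filtering $S_{m}^{(1)}$ by the subgroups $G^{(4)}\subset G^{(5)}\subset\cdots\subset G^{(m+1)}=S_{m}^{(1)}$ and writing each $g_{k}$ in nested form, so that once all variables with superscript at most $j+1$ are fixed, the level-$j$ truncation of $g_{k}$ is an affine function of the single new variable $y_{k+j+1}^{(j+2)}$ whose leading coefficient is the already-nonzero level-$(j-1)$ value. The leading variables $y_{k+j+1}^{(j+2)}$ are pairwise distinct as $k$ varies, so the $n-m$ polynomials are handled independently at each level, and the base of the induction is the explicit two-variable computation of Lemma \ref{l3}. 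If you want to rescue your subscript-based induction, you would need an inductive statement strong enough to control the auxiliary polynomials $R^{(l)}$ (and their own auxiliaries) simultaneously with the $P_{i}$; once written down, that essentially reproduces the superscript-by-superscript scheme, so I would recommend adopting that decomposition from the start.
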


We will divide the proof of the previous lemma in the next ones.

\begin{lema}\label{l2}
Let $\sigma\in S_{2}^{(1)}$ and $\lambda_{\sigma}\in\K$ where $\lambda_{id}=1$. Then we can replace the variables $y_{2}^{(2)},\dots,y_{n-1}^{(2)}$ by scalars in $\K$ such that all the following polynomials
\begin{eqnarray}\label{a3}
\left\{\begin{array}{c}
\displaystyle\sum_{\sigma\in S_{2}^{(1)}}\lambda_{\sigma}y_{2}^{(\sigma(2))}\\
\displaystyle\sum_{\sigma\in S_{2}^{(1)}}\lambda_{\sigma}y_{3}^{(\sigma(2))}\\
\vdots \\
\displaystyle\sum_{\sigma\in S_{2}^{(1)}}\lambda_{\sigma}y_{n-1}^{(\sigma(2))}
\end{array}\right.
\end{eqnarray}
take some nonzero values in $\K$.
\end{lema}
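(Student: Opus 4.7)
The plan is almost entirely forced by the hypothesis, so this proposal will be short. First I would unpack the group $S_2^{(1)}$. Since $S_2 = \{\mathrm{id}, (1\,2)\}$ and the defining condition $\sigma(1) = 1$ rules out the transposition, we have $S_2^{(1)} = \{\mathrm{id}\}$. Consequently each sum appearing in the system (\ref{a3}) is a \emph{single-term} sum, and since $\mathrm{id}(2) = 2$ together with $\lambda_{\mathrm{id}} = 1$, the $i$-th sum collapses to
$$\sum_{\sigma \in S_2^{(1)}} \lambda_\sigma y_i^{(\sigma(2))} \;=\; \lambda_{\mathrm{id}} \, y_i^{(2)} \;=\; y_i^{(2)}, \qquad i = 2, 3, \ldots, n-1.$$

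The next step is then immediate: I would simply specialize each of the variables $y_2^{(2)}, y_3^{(2)}, \ldots, y_{n-1}^{(2)}$ to a fixed nonzero scalar, for instance $1 \in \K$. Under this assignment every polynomial in (\ref{a3}) evaluates to $1 \neq 0$, which is exactly the claim.

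There is no genuine obstacle. The role of Lemma \ref{l2} appears to be that of a base case ($m = 2$) for an induction on $m$ which will ultimately establish Lemma \ref{l1}; the inductive step, where $S_m^{(1)}$ with $m \geq 3$ contributes genuinely many permutations and the polynomials acquire products of several $y$'s, is where the real work (and any real obstacles) will lie.
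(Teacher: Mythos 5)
Your proof is correct and is essentially identical to the paper's: both observe that $S_2^{(1)}=\{\mathrm{id}\}$, so each sum collapses to $y_i^{(2)}$, and then substitute $1$ for every variable.
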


\begin{proof}
Since $S_{2}^{(1)}=\{id\}$, each polynomial in (\ref{a3}) can be written as $\displaystyle\sum_{\sigma\in S_{2}^{(1)}}\lambda_{\sigma}y_{k+1}^{(\sigma(2))}=y_{k+1}^{(2)}$ with $k\in\{1,\dots,n-2\}$. Then we replace each $y_{k+1}^{(2)}$ by $1$ and get nonzero values.
\end{proof}

\begin{lema}\label{l3}
Let $\sigma\in S_{3}^{(1)}$ and $\lambda_{\sigma}\in\K$ where $\lambda_{id}=1$. Then we can replace the variables $y_{2}^{(2)},\dots,y_{n-1}^{(2)},y_{2}^{(3)},\dots,y_{n-1}^{(3)}$ by scalars in $\K$ such that all the following polynomials
\begin{eqnarray}\label{a1}
\left\{\begin{array}{c}
\displaystyle\sum_{\sigma\in S_{3}^{(1)}}\lambda_{\sigma}y_{2}^{(\sigma(2))}y_{3}^{(\sigma(3))}\\
\displaystyle\sum_{\sigma\in S_{3}^{(1)}}\lambda_{\sigma}y_{3}^{(\sigma(2))}y_{4}^{(\sigma(3))}\\
\vdots\\
\displaystyle\sum_{\sigma\in S_{3}^{(1)}}\lambda_{\sigma}y_{n-2}^{(\sigma(2))}y_{n-1}^{(\sigma(3))}
\end{array}\right.
\end{eqnarray}
take some nonzero values in $\K$.
\end{lema}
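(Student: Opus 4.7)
The plan is to reduce Lemma \ref{l3} to a one-dimensional recursive choice by specializing half of the variables. Since $S_3^{(1)} = \{\mathrm{id}, \tau\}$, where $\tau$ is the transposition $(2\ 3)$, and writing $\lambda := \lambda_\tau \in \K$, each polynomial in (\ref{a1}) takes the form
$$P_k = y_k^{(2)}\, y_{k+1}^{(3)} + \lambda\, y_k^{(3)}\, y_{k+1}^{(2)}, \qquad k \in \{2, \ldots, n-2\}.$$

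The first step is to assign $y_k^{(2)} = 1$ for every $k \in \{2, \ldots, n-1\}$. This collapses $P_k$ to the linear expression $y_{k+1}^{(3)} + \lambda\, y_k^{(3)}$ in the remaining variables $b_k := y_k^{(3)}$, so it suffices to choose $b_2, b_3, \ldots, b_{n-1} \in \K$ such that $b_{k+1} + \lambda\, b_k \neq 0$ for every $k \in \{2, \ldots, n-2\}$. I would do this recursively: set $b_2 = 1$ and, having chosen $b_k$, pick $b_{k+1}$ to be any element of $\K \setminus \{-\lambda b_k\}$; this set is nonempty because any field has at least two elements, so the recursion runs to completion and every $P_k$ is nonzero.

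The case to be careful about is $\lambda = -1$ (or $\lambda = 1$ in characteristic two), where the naive uniform choice $y_k^{(j)} = 1$, and even a separable ansatz of the form $y_k^{(j)} = \alpha_j \beta_k$, force $P_k = (1 + \lambda)\alpha_2\alpha_3\beta_k\beta_{k+1} = 0$. The recursive construction above sidesteps this obstruction precisely because each $b_{k+1}$ is selected with only a single forbidden value to avoid, a choice that is always feasible over any field and irrespective of $\lambda$. I expect this to be the only genuinely non-trivial point; the rest is bookkeeping, and the same greedy strategy should later be iterated over the superscripts $j = 2, \ldots, m$ to prove Lemma \ref{l1} in full.
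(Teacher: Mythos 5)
Your proof is correct. It differs from the paper's in the concrete specialization: the paper splits into cases on whether $\lambda_{(23)}$ vanishes and, when it does not, uses a fixed parity pattern (setting $y_{k}^{(2)}=0,\ y_{k}^{(3)}=1$ for $k$ odd and $y_{k}^{(2)}=1,\ y_{k}^{(3)}=0$ for $k$ even), so that each polynomial evaluates to $1$ or $\lambda_{(23)}$; you instead set all $y_{k}^{(2)}=1$ and choose the $y_{k}^{(3)}$ by a greedy recursion, each new value avoiding the single forbidden element $-\lambda y_{k}^{(3)}$. Both are elementary and valid over an arbitrary field (your feasibility argument, that $\K\setminus\{-\lambda b_k\}$ is nonempty since $|\K|\geq 2$, is sound, and your observation that any rank-one ansatz $y_k^{(j)}=\alpha_j\beta_k$ dies when $\lambda=-1$ correctly identifies the obstruction both constructions must dodge). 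The paper's choice is non-adaptive and case-split, which makes the resulting scalars slightly more explicit when it reuses this construction verbatim as Step 1 of Lemmas \ref{l4} and \ref{l1}; your adaptive choice avoids the case split at the cost of each scalar depending on the previous one, which is harmless here and would also feed into the later iterative argument without difficulty.
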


\begin{proof}
Since $S_{3}^{(1)}=\{id,(23)\}$, we can rewrite (\ref{a1}) as  
\begin{eqnarray}\label{a2}
\left\{\begin{array}{c}
y_{2}^{(2)}y_{3}^{(3)}+\lambda_{(23)}y_{2}^{(3)}y_{3}^{(2)}\\
y_{3}^{(2)}y_{4}^{(3)}+\lambda_{(23)}y_{3}^{(3)}y_{4}^{(2)}\\
\vdots\\
y_{n-2}^{(2)}y_{n-1}^{(3)}+\lambda_{(23)}y_{n-2}^{(3)}y_{n-1}^{(2)}
\end{array}\right.
\end{eqnarray}

If $\lambda_{(23)}=0$, then we just replace each $y_{k+1}^{(2)}$ and $y_{k+2}^{(3)}$ by 1 for $k\in\{1,\dots,n-3\}$, and the proof is done.

If $\lambda_{(23)}\neq0$, then we replace
\begin{eqnarray}
\left\{\begin{array}{c}
y_{k}^{(2)} \ \mbox{by} \ 0 \ \mbox{and} \ y_{k}^{(3)}\ \mbox{by} \ 1, \mbox{if $k$ is odd;}\\
y_{k}^{(2)}\ \mbox{by} \ 1 \ \mbox{and} \ y_{k}^{(3)}\ \mbox{by} \ 0, \mbox{if $k$ is even,}
\end{array}\right.
\end{eqnarray}
for $k\in\{2,\dots,n-1\}$. Therefore, each valuation in (\ref{a2}) will be 1 or $\lambda_{(23)}$.
\end{proof}

The veracity of Lemma \ref{l1} for $m=2$ and $m=3$ follows from Lemma \ref{l2} and Lemma \ref{l3}, respectively.

Before the general proof of Lemma \ref{l1} for $m\geq4$, we will illustrate it in the case $m=4$.

\begin{lema}\label{l4}
Let $\sigma\in S_{4}^{(1)}$ and $\lambda_{\sigma}\in\K$ where $\lambda_{id}=1$. Then we can replace the variables $y_{2}^{(2)},\dots,y_{n-1}^{(2)},y_{2}^{(3)},\dots,y_{n-1}^{(3)},y_{2}^{(4)},\dots,y_{n-1}^{(4)}$ by scalars in $\K$ so that all the following polynomials
\begin{eqnarray}\nonumber
\left\{\begin{array}{c}
\displaystyle\sum_{\sigma\in S_{4}^{(1)}}\lambda_{\sigma}y_{2}^{(\sigma(2))}y_{3}^{(\sigma(3))}y_{4}^{(\sigma(4))}\\
\displaystyle\sum_{\sigma\in S_{4}^{(1)}}\lambda_{\sigma}y_{3}^{(\sigma(2))}y_{4}^{(\sigma(3))}y_{5}^{(\sigma(4))}\\
\vdots\\
\displaystyle\sum_{\sigma\in S_{4}^{(1)}}\lambda_{\sigma}y_{n-3}^{(\sigma(2))}y_{n-2}^{(\sigma(3))}y_{n-1}^{(\sigma(4))}\\
\end{array}\right.
\end{eqnarray}
take some nonzero values in $\K$.
\end{lema}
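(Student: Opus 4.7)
The plan is to mirror the strategy of Lemma \ref{l3}: exhibit an explicit scalar substitution and check nonzero-ness by case analysis on which coefficients $\lambda_{\sigma}$ vanish. First I would partition the six elements of $S_{4}^{(1)}$ into the three cosets of $G^{(4)}=\{id,(23)\}$ by the value $j=\sigma^{-1}(4)\in\{2,3,4\}$. This decomposes the $k$-th polynomial in the statement (for $k=2,\dots,n-3$) as
$$y_{k+2}^{(4)}\,A_{k}\;+\;y_{k+1}^{(4)}\,B_{k}\;+\;y_{k}^{(4)}\,C_{k},$$
where $A_{k}=y_{k}^{(2)}y_{k+1}^{(3)}+\lambda_{(23)}y_{k}^{(3)}y_{k+1}^{(2)}$ is exactly the $m=3$ polynomial of Lemma \ref{l3}, and $B_{k}=\lambda_{(34)}y_{k}^{(2)}y_{k+2}^{(3)}+\lambda_{(234)}y_{k}^{(3)}y_{k+2}^{(2)}$ and $C_{k}=\lambda_{(24)}y_{k+1}^{(3)}y_{k+2}^{(2)}+\lambda_{(243)}y_{k+1}^{(2)}y_{k+2}^{(3)}$ are the analogous two-term expressions produced by the cosets $j=3$ and $j=2$.

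I would then invoke Lemma \ref{l3} to fix scalars $y_{k}^{(2)},y_{k}^{(3)}\in\K$ so that every $A_{k}$ is nonzero, which reduces the problem to choosing $y_{k}^{(4)}$ so that $y_{k+2}^{(4)}A_{k}+y_{k+1}^{(4)}B_{k}+y_{k}^{(4)}C_{k}\neq 0$ for every $k$. The natural first try, $y_{k}^{(4)}=1$ for all $k$, produces $A_{k}+B_{k}+C_{k}$; this already closes the argument in the easy subcase $\lambda_{(34)}=\lambda_{(234)}=\lambda_{(24)}=\lambda_{(243)}=0$ (then $B_{k}=C_{k}=0$ and the value is $A_{k}\neq 0$), and more generally whenever the total makes no cancellation. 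In the remaining cases I would instead take $y_{k}^{(4)}$ to be the indicator of a residue class of $k$ modulo $3$, so that for each $k$ exactly one of the three summands survives, and align the residue so that the surviving summand is $A_{k}$ whenever the coefficients of the other two cosets both vanish, and a genuinely two-term $B_{k}$ or $C_{k}$ otherwise.

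The principal obstacle is the case analysis: with five potentially vanishing coefficients $\lambda_{(23)},\lambda_{(34)},\lambda_{(24)},\lambda_{(234)},\lambda_{(243)}$, several delicate subcases arise in which neither the constant substitution nor the residue-mod-$3$ substitution prevents cancellation. In such cases I would perturb the parity-based substitution of $y_{k}^{(2)},y_{k}^{(3)}$ from Lemma \ref{l3}---e.g.\ using a third scalar value beyond $0$ and $1$---so that $A_{k}$ stays nonzero while an unavoidable $B_{k}$ or $C_{k}$ combines with $A_{k}$ without cancellation. Because $\K$ is allowed to be finite, each subcase requires a genuinely combinatorial substitution rather than a genericity argument; the work lies in enumerating these subcases and exhibiting explicit scalars in $\K$ in each, extending the two-case split of Lemma \ref{l3}. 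The resulting pattern---coset decomposition along $G^{(j)}$ followed by case-analytic substitution---will then serve as the template for the inductive proof of Lemma \ref{l1} in the general case $m\geq 4$.
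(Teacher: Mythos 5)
Your coset decomposition of the $k$-th polynomial as $y_{k+2}^{(4)}A_{k}+y_{k+1}^{(4)}B_{k}+y_{k}^{(4)}C_{k}$ (according to $\sigma^{-1}(4)\in\{4,3,2\}$) is exactly the decomposition the paper uses, and the reduction via Lemma \ref{l3} to making every $A_{k}$ nonzero is also the paper's Step 1. The gap is in what you do next: you look for a single \emph{simultaneous} explicit assignment of the $y^{(4)}$'s, determined by a case analysis on which of $\lambda_{(23)},\lambda_{(34)},\lambda_{(24)},\lambda_{(234)},\lambda_{(243)}$ vanish, and you concede that this analysis is left unfinished. The two concrete substitutions you do propose both fail in easy subcases: the all-ones choice gives $A_{k}+B_{k}+C_{k}$, which can vanish (e.g.\ $\lambda_{(23)}=0$, $\lambda_{(34)}=-1$, the rest zero), and the residue-class indicator cannot work as described, because a single residue class makes the surviving summand cycle through $A_{k},B_{k},C_{k}$ as $k$ increases, so some polynomial is forced to evaluate to $C_{k}$, which is identically zero whenever $\lambda_{(24)}=\lambda_{(243)}=0$. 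The fallback of ``perturbing with a third scalar'' is unavailable over $\mathbb{F}_{2}$ and is not worked out. So as written the proof does not close.

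The idea you are missing is that no global formula (and no case analysis on the $\lambda$'s beyond Lemma \ref{l3}) is needed, because the $y^{(4)}$ variables can be assigned \emph{sequentially}. Fix $y_{2}^{(4)},y_{3}^{(4)}$ arbitrarily. The $k$-th polynomial ($k=2,\dots,n-3$ in your indexing) contains the variable $y_{k+2}^{(4)}$, which does not occur in any earlier polynomial (those only involve subscripts up to $k+1$), and its coefficient in the $k$-th polynomial is precisely $A_{k}\neq0$. Hence, once $y_{2}^{(4)},\dots,y_{k+1}^{(4)}$ have been fixed, the $k$-th polynomial is an affine function $A_{k}\,y_{k+2}^{(4)}+c$ of the not-yet-assigned variable $y_{k+2}^{(4)}$ with nonzero leading coefficient; since any field has at least two elements, some value of $y_{k+2}^{(4)}$ makes it nonzero. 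Iterating over $k$ in increasing order finishes the proof over an arbitrary $\K$, with no enumeration of vanishing patterns of the $\lambda_{\sigma}$. This greedy elimination is also what makes the inductive step for general $m$ in Lemma \ref{l1} tractable, so the template you extract from your version would not scale.
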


\begin{proof}
First of all, we rewrite the above system as

\begin{eqnarray}\label{el}
\left\{\begin{array}{c}
\displaystyle\bigg(\sum_{\sigma\in G^{(4)}}\lambda_{\sigma}y_{2}^{(\sigma(2))}y_{3}^{(\sigma(3))}\bigg)y_{4}^{(4)}+\sum_{\sigma \in S_{4}^{(1)}-G^{(4)}}\lambda_{\sigma}y_{2}^{(\sigma(2))}y_{3}^{(\sigma(3))}y_{4}^{(\sigma(4))}\\
\displaystyle\bigg(\sum_{\sigma\in G^{(4)}}\lambda_{\sigma}y_{3}^{(\sigma(2))}y_{4}^{(\sigma(3))}\bigg)y_{5}^{(4)}+\sum_{\sigma \in S_{4}^{(1)}-G^{(4)}}\lambda_{\sigma}y_{3}^{(\sigma(2))}y_{4}^{(\sigma(3))}y_{5}^{(\sigma(4))}\\
\vdots \\
\displaystyle\bigg(\sum_{\sigma\in G^{(4)}}\lambda_{\sigma}y_{n-3}^{(\sigma(2))}y_{n-2}^{(\sigma(3))}\bigg)y_{n-1}^{(4)}+\sum_{\sigma \in S_{4}^{(1)}-G^{(4)}}\lambda_{\sigma}y_{n-3}^{(\sigma(2))}y_{n-2}^{(\sigma(3))}y_{n-1}^{(\sigma(4))}\\
\end{array}\right.
\end{eqnarray}

and then the proof will be obtained by the next two steps.

{\bf Step 1:} We claim that for suitable choices of variables, the polynomials $\displaystyle\sum_{\sigma\in G^{(4)}}\lambda_{\sigma}y_{k+1}^{(\sigma(2))}y_{k+2}^{(\sigma(3))}$ take nonzero values in $\K$, for all $k\in\{1,\dots,n-4\}$.

Indeed, since $G^{(4)}=\{id,(23)\}$, using the same idea as in the proof of Lemma \ref{l3}, we can replace the variables $y_{k+1}^{(\sigma(2))},y_{k+2}^{(\sigma(3))}$ by scalars $\alpha_{k+1}^{(\sigma(2))},\alpha_{k+2}^{(\sigma(3))}$ in $\K$ so that $\displaystyle\sum_{\sigma\in G^{(4)}}\lambda_{\sigma}\alpha_{k+1}^{(\sigma(2))}\alpha_{k+2}^{(\sigma(3))}\neq0$, for all $k\in\{1,\dots,n-4\}$.

{\bf Step 2:} We proceed by applying the iterative process in the following cases.

\begin{itemize}
\item[Case 1:] In the first polynomial of (\ref{el}), we treat all variables $y$'s except $y_{4}^{(4)}$ as scalars $\alpha$'s, and then we arrive at a linear function in terms of $y_{4}^{(4)}$:
\begin{eqnarray}\label{e5}
\bigg(\sum_{\sigma\in G^{(4)}}\lambda_{\sigma}\alpha_{2}^{(\sigma(2))}\alpha_{3}^{(\sigma(3))}\bigg)y_{4}^{(4)}+  \sum_{\sigma \in S_{4}^{(1)}-G^{(4)}}\lambda_{\sigma}\alpha_{2}^{(\sigma(2))}\alpha_{3}^{(\sigma(3))}\alpha_{4}^{(\sigma(4))}.
\end{eqnarray}

By Step 1, the coefficient of $y_{4}^{(4)}$ above is nonzero. Then we can replace $y_{4}^{(4)}$ to be equal to some element in $\K$ so that the value of the polynomial (\ref{e5}) is nonzero in $\K$.


\item[Case 2:]  In the second polynomial in (\ref{el}), we treat all variables $y$'s  except $y_{5}^{(4)}$ as scalars $\alpha$'s, and then we arrive at a linear function in terms of $y_{5}^{(4)}$:
\begin{eqnarray}\label{e6}
\bigg(\sum_{\sigma\in G^{(4)}}\lambda_{\sigma}\alpha_{3}^{(\sigma(2))}\alpha_{4}^{(\sigma(3))}\bigg)y_{5}^{(4)}+  \sum_{\sigma \in S_{4}^{(1)}-G^{(4)}}\lambda_{\sigma}\alpha_{3}^{(\sigma(2))}\alpha_{4}^{(\sigma(3))}\alpha_{5}^{(\sigma(4))}.
\end{eqnarray}

By  Step 1, the coefficient of $y_{5}^{(4)}$ above is nonzero. Then we can replace $y_{5}^{(4)}$ to be equal to some element in $\K$ so that the value of the polynomial (\ref{e6}) is nonzero in $\K$.


\item[\vdots]

\item[Case $n-4$:]  In the last polynomial in (\ref{el}), we treat all variables $y$'s  except $y_{n-1}^{(4)}$ as scalars $\alpha$'s, and then we arrive at a linear function in terms of $y_{n-1}^{(4)}$:
\begin{eqnarray}\label{e7}
\bigg(\sum_{\sigma\in G^{(4)}}\lambda_{\sigma}\alpha_{n-3}^{(\sigma(2))}\alpha_{n-2}^{(\sigma(3))}\bigg)y_{n-1}^{(4)}+  \sum_{\sigma \in S_{4}^{(1)}-G^{(4)}}\lambda_{\sigma}\alpha_{n-3}^{(\sigma(2))}\alpha_{n-2}^{(\sigma(3))}\alpha_{n-1}^{(\sigma(4))}.
\end{eqnarray}

By  Step 1, the coefficient of $y_{n-1}^{(4)}$ above is nonzero. Then we can replace $y_{n-1}^{(4)}$ to be equal to some element in $\K$ so that the value of the polynomial (\ref{e7}) is nonzero in $\K$.

\end{itemize}
\end{proof}

Now we are able to prove Lemma \ref{l1}.
\\

{\it Proof of Lemma \ref{l1}:} By Lemmas \ref{l2} and \ref{l3} we may assume $m\geq4$.

Let $k\in\{1,\dots,n-m\}$. 

Then the $k$-th polynomial in (\ref{a4}) is $$g_{k}=\sum_{\sigma\in S_{m}^{(1)}}\lambda_{\sigma}y_{k+1}^{(\sigma(2))}\cdots y_{k+m-1}^{(\sigma(m))}.$$

Now we observe that $g_{k}$ can be written in the following way

\begin{eqnarray}\nonumber 
g_{k}=\bigg(\bigg(\cdots\bigg(\bigg(\bigg(\sum_{\sigma\in G^{(4)}}\lambda_{\sigma}y_{k+1}^{(\sigma(2))}y_{k+2}^{(\sigma(3))}\bigg)y_{k+3}^{(4)}+\sum_{\sigma\in G^{(5)}-S_{m}^{(4)}}\lambda_{\sigma}y_{k+1}^{(\sigma(2))}y_{k+2}^{(\sigma(3))}y_{k+3}^{(\sigma(4))}\bigg)y_{k+4}^{(5)}\\\nonumber
+\sum_{\sigma\in G^{(6)}-S_{m}^{(5)}}\lambda_{\sigma}y_{k+1}^{(\sigma(2))}y_{k+2}^{(\sigma(3))}y_{k+3}^{(\sigma(4))}y_{k+4}^{(\sigma(5))}\bigg)y_{k+5}^{(6)}+\cdots\bigg)y_{k+m-2}^{(m-1)}\\\nonumber
+\sum_{\sigma\in G^{(m)}-S_{m}^{(m-1)}}\lambda_{\sigma}y_{k+1}^{(\sigma(2))}\cdots y_{k+m-2}^{(\sigma(m-1))}\bigg)y_{k+m-1}^{(m)}+\sum_{\sigma\in S_{m}^{(1)}-S_{m}^{(m)}}\lambda_{\sigma}y_{k+1}^{(\sigma(2))}\cdots y_{k+m-1}^{(\sigma(m))}.
\end{eqnarray}

The proof will be done in $m-2$ steps, where the Step 1 is a special case and for each $j\in\{2,\dots,m-2\}$, in Step $j$ we will use the previous steps to conclude that the polynomials
\begin{eqnarray}\nonumber\label{a5}
\bigg(\cdots\bigg(\bigg(\sum_{\sigma\in G^{(4)}}\lambda_{\sigma}y_{k+1}^{(\sigma(2))}y_{k+2}^{(\sigma(3))}\bigg)y_{k+3}^{(4)}+\sum_{\sigma\in G^{(5)}-S_{m}^{(4)}}\lambda_{\sigma}y_{k+1}^{(\sigma(2))}y_{k+2}^{(\sigma(3))}y_{k+3}^{(\sigma(4))}\bigg)y_{k+4}^{(5)}+\cdots\bigg)y_{j+k+1}^{(j+2)}\\
+\sum_{\sigma\in G^{(j+3)}-S_{m}^{(j+2)}}\lambda_{\sigma}y_{k+1}^{(\sigma(2))}\cdots y_{j+k+1}^{(\sigma(j+2))}
\end{eqnarray}
take some nonzero values in $\K$, for all $k\in\{1,\dots,n-m\}$.

{\bf Step 1:}

We claim that for suitable choices of variables, the polynomials $\displaystyle\sum_{\sigma\in G^{(4)}}\lambda_{\sigma}y_{k+1}^{(\sigma(2))}y_{k+2}^{(\sigma(3))}$ take nonzero values in $\K$, for all $k\in\{1,\dots,n-m\}$.

Indeed, since $G^{(4)}=\{id,(23)\}$, using the same idea as in the proof of Lemma \ref{l3}, we can replace the variables $y_{k+1}^{(\sigma(2))},y_{k+2}^{(\sigma(3))}$ by scalars $\alpha_{k+1}^{(\sigma(2))},\alpha_{k+2}^{(\sigma(3))}$ in $\K$ such that $\displaystyle\sum_{\sigma\in G^{(4)}}\lambda_{\sigma}\alpha_{k+1}^{(\sigma(2))}\alpha_{k+2}^{(\sigma(3))}\neq0$, for all $k\in\{1,\dots,n-m\}$.

Now we assume that the  Step $j-1$ is done, and then the  Step $j$ will be the following.

{\bf Step j:} We proceed by applying the iterative process in the following cases.

\begin{itemize}
\item[Case 1:] For $k=1$ in (\ref{a5}), we treat all variables $y$'s except $y_{j+2}^{(j+2)}$ as scalars $\alpha$'s, and then we arrive at a linear function in terms of $y_{j+2}^{(j+2)}$:
\begin{multline}\label{p1}
\bigg(\cdots\bigg(\bigg(\sum_{\sigma\in G^{(4)}}\lambda_{\sigma}\alpha_{2}^{(\sigma(2))}\alpha_{3}^{(\sigma(3))}\bigg)\alpha_{4}^{(4)}+\sum_{\sigma\in G^{(5)}-S_{m}^{(4)}}\lambda_{\sigma}\alpha_{2}^{(\sigma(2))}\alpha_{3}^{(\sigma(3))}\alpha_{4}^{(\sigma(4))}\bigg)\alpha_{5}^{(5)}+\cdots\bigg)y_{j+2}^{(j+2)}\\
+\sum_{\sigma\in G^{(j+3)}-S_{m}^{(j+2)}}\lambda_{\sigma}\alpha_{2}^{(\sigma(2))}\cdots \alpha_{j+2}^{(\sigma(j+2))}
\end{multline}

By  Step $j-1$, the coefficient of $y_{j+2}^{(j+2)}$ above is nonzero. Then we can take $y_{j+2}^{(j+2)}$ to be equal to some element in $\K$ so that the value of the polynomial (\ref{p1}) is nonzero in $\K$.


\item[Case 2:]  For $k=2$ in (\ref{a5}), we treat all variables $y$'s  except $y_{j+3}^{(j+2)}$ as scalars $\alpha$'s, and then we arrive at a linear function in terms of $y_{j+3}^{(j+2)}$:
\begin{multline}\label{p2}
\bigg(\cdots\bigg(\bigg(\sum_{\sigma\in G^{(4)}}\lambda_{\sigma}\alpha_{3}^{(\sigma(2))}\alpha_{4}^{(\sigma(3))}\bigg)\alpha_{5}^{(4)}+\sum_{\sigma\in G^{(5)}-S_{m}^{(4)}}\lambda_{\sigma}\alpha_{3}^{(\sigma(2))}\alpha_{4}^{(\sigma(3))}\alpha_{5}^{(\sigma(4))}\bigg)\alpha_{6}^{(5)}+\cdots\bigg)y_{j+3}^{(j+2)}\\
+\sum_{\sigma\in G^{(j+3)}-S_{m}^{(j+2)}}\lambda_{\sigma}\alpha_{3}^{(\sigma(2))}\cdots \alpha_{j+3}^{(\sigma(j+2))}
\end{multline}
By  Step $j-1$, the coefficient of $y_{j+3}^{(j+2)}$ above is nonzero. Then we can take $y_{j+3}^{(j+2)}$ to be equal to some element in $\K$ so that the value of the polynomial (\ref{p2}) is nonzero in $\K$.


\item[\vdots]

\item[Case $n-m$:] For $k=n-m$  in (\ref{a5}), we treat all variables $y$'s  except $y_{n-m+j+1}^{(j+2)}$ as scalars $\alpha$'s, and then we arrive at a linear function in terms of $y_{n-m+j+1}^{(j+2)}$:
\begin{multline}\label{p3}
\bigg(\cdots\bigg(\bigg(\sum_{\sigma\in G^{(4)}}\lambda_{\sigma}\alpha_{n-m+1}^{(\sigma(2))}\alpha_{n-m+2}^{(\sigma(3))}\bigg)\alpha_{n-m+3}^{(4)}+\sum_{\sigma\in G^{(5)}-S_{m}^{(4)}}\lambda_{\sigma}\alpha_{n-m+1}^{(\sigma(2))}\alpha_{n-m+2}^{(\sigma(3))}\alpha_{n-m+3}^{(\sigma(4))}\bigg)\alpha_{n-m+4}^{(5)}\\
+\cdots\bigg)y_{n-m+j+1}^{(j+2)}+\sum_{\sigma\in G^{(j+3)}-S_{m}^{(j+2)}}\lambda_{\sigma}\alpha_{n-m+1}^{(\sigma(2))}\cdots \alpha_{n-m+j+1}^{(\sigma(j+2))}
\end{multline}

By  Step $j-1$, the coefficient of $y_{n-m+j+1}^{(j+2)}$ above is nonzero. Then we can take $y_{n-m+j+1}^{(j+2)}$ to be equal to some element in $\K$ so that the value of the polynomial (\ref{p3}) is nonzero in $\K$, and then the lemma is proved.
\end{itemize}
 
\hspace{16.7cm}{\scriptsize $\blacksquare$}

\section{Proof of the main theorem}

We start this section with the following definition.

\begin{defi}\label{d1}
Let $\K$ be any field, let $n\geq2$ be an integer and $i\in\{1,\dots,n\}$. We will say that a matrix in $UT_{n}(\K)$ is $(i)$-diagonal if the $(k,k+(i-1))$ entries are the only ones possibly nonzero, with $k=1,\dots,n-i+1$. In other words, an $(i)$-diagonal matrix is one in the form
\begin{eqnarray}\nonumber
\left(\begin{array}{cccccc}
 & & & \alpha_{1,i} & &0 \\
 & & & & \ddots & \\
 & & & & & \alpha_{n-i+1,n}\\ 
 & & & & & \\ 
 0& & & & & \\
\end{array}\right).
\end{eqnarray}
\end{defi}

It is easy to see that every matrix $B\in UT_{n}^{(m-1)}$ can be written as a sum of $(i)$-diagonal matrices, with $i\in \{m+1,\dots,n\}$. Indeed, if $B=(b_{p,q})_{p,q=1}^{n}$, then we write
\begin{eqnarray}\label{e}
B=\sum_{i=m+1}^{n}B_{i}
\end{eqnarray}
where $B_{i}$ is an $(i)$-diagonal matrix whose $(k,k+(i-1))$ entry is equal to $b_{k,k+(i-1)}$, for all $k\in\{1,\dots,n-i+1\}$.

With a slight modification of Definition \ref{d1}, we can also consider $(i)$-diagonal matrices with entries in $\K[Y]$.

To prove Theorem \ref{t1} we assume that $m\geq2$ and that the image of $f(x_{1},\dots,x_{m})=\displaystyle\sum_{\sigma\in S_{m}}\lambda_{\sigma}x_{\sigma(1)}\cdots x_{\sigma(m)}$ on $UT_{n}^{(0)}$ is nonzero. In other words, we assume $n>m\geq2$ and $\lambda_{id}=1$.

We also observe that 
\begin{eqnarray}\label{e1}
f=\sum_{j=1}^{m}f_{j},
\end{eqnarray}

where each $f_{j}$ is the sum of all monomials of $f$ whose $j$-th variable is equal to $x_{1}$.

Taking 
\begin{eqnarray}\nonumber
\displaystyle x_{1}^{(m+1)}=\sum_{k=1}^{n-1}y_{k}^{(m+1)}e_{k,k+1},x_{2}=\sum_{k=1}^{n-1}y_{k}^{(2)}e_{k,k+1},\cdots,x_{m}=\sum_{k=1}^{n-1}y_{k}^{(m)}e_{k,k+1}
\end{eqnarray}

as $(2)$-diagonal matrices with entries in $\K[Y]$, by (\ref{e1}) we have
\begin{eqnarray}\nonumber 
f(x_{1}^{(m+1)},x_{2},\dots,x_{m})=\sum_{k=1}^{n-m}\bigg(y_{k}^{(m+1)}\sum_{\sigma\in S_{m}^{(1)}}\lambda_{\sigma}y_{k+1}^{(\sigma(2))}\cdots y_{k+m-1}^{(\sigma(m))}\\
+y_{k+1}^{(m+1)}\delta_{2}^{(m+1)}(x_{2},\dots,x_{m})+\cdots+ y_{k+m-1}^{(m+1)}\delta_{m}^{(m+1)}(x_{2},\dots,x_{m})\bigg)e_{k,k+m}
\end{eqnarray}

where $y_{k+j-1}^{(m+1)}\delta_{j}^{(m+1)}(x_{2},\dots,x_{m})$ denotes the $(k,k+m)$ entry of the matrix 
$$f_{j}(x_{1}^{(m+1)},x_{2},\dots,x_{m})$$  for $j=2,\dots,m.$ 

Now considering $B_{m+1}=\displaystyle\sum_{k=1}^{n-m}b_{k}^{(m+1)}e_{k,k+m}\in UT_{n}^{(m-1)}$, we seek for a solution of the following nonlinear system:
\begin{eqnarray}\label{e2}
\scriptsize\left\{\begin{array}{ccc}
\displaystyle y_{1}^{(m+1)}\sum_{\sigma\in S_{m}^{(1)}}\lambda_{\sigma}y_{2}^{(\sigma(2))}\cdots y_{m}^{(\sigma(m))} +y_{2}^{(m+1)}\delta_{2}^{(m+1)}(x_{2},\dots,x_{m})+\cdots+ y_{m}^{(m+1)}\delta_{m}^{(m+1)}(x_{2},\dots,x_{m})&=&b_{1}^{(m+1)}\\
\displaystyle y_{2}^{(m+1)}\sum_{\sigma\in S_{m}^{(1)}}\lambda_{\sigma}y_{3}^{(\sigma(2))}\cdots y_{m+1}^{(\sigma(m))} +y_{3}^{(m+1)}\delta_{2}^{(m+1)}(x_{2},\dots,x_{m})+\cdots+ y_{m+1}^{(m+1)}\delta_{m}^{(m+1)}(x_{2},\dots,x_{m})&=&b_{2}^{(m+1)}\\
&\vdots & \\
\displaystyle y_{n-m}^{(m+1)}\sum_{\sigma\in S_{m}^{(1)}}\lambda_{\sigma}y_{n-m+1}^{(\sigma(2))}\cdots y_{n-1}^{(\sigma(m))} +y_{n-m+1}^{(m+1)}\delta_{2}^{(m+1)}(x_{2},\dots,x_{m})+\cdots+ y_{n-1}^{(m+1)}\delta_{m}^{(m+1)}(x_{2},\dots,x_{m})&=&b_{n-m}^{(m+1)}\\
\end{array}\right.
\end{eqnarray}

Using Lemma \ref{l1}, we can find  matrices in $UT_{n}^{(0)}$ $$\bar{x}_{2}=\sum_{l=1}^{n-1}\alpha_{l}^{(2)}e_{l,l+1},\dots,\bar{x}_{m}=\sum_{l=1}^{n-1}\alpha_{l}^{(m)}e_{l,l+1},$$ so that $\displaystyle\sum_{\sigma\in S_{m}^{(1)}}\lambda_{\sigma}\alpha_{k+1}^{(\sigma(2))}\cdots \alpha_{k+m-1}^{(\sigma(m))}$ are nonzero, for all $k\in\{1,\dots,n-1\}$. Then the system (\ref{e2}) turned into a linear system in the variables $y_{k}^{(m+1)}$, with $k\in\{1,\dots,n-1\}$. 

This system can be solved recursively starting with the last equation: we replace $y_{n-m+1}^{(m+1)}$, $y_{n-m+2}^{(m+1)}$, $\dots$, $y_{n-1}^{(m+1)}$ by any values (for example by $0$), and solve it for $y_{n-m}^{(m+1)}$. Then we solve the previous equation for $y_{n-m-1}^{(m+1)}$ etc.

Hence, any $(m+1)$-diagonal matrix $B_{m+1}$ can be realized as $f(\bar{x}_{1}^{(m+1)},\bar{x}_{2},\dots,\bar{x}_{m})$, for some  matrix $\bar{x}_{1}^{(m+1)}$ in $UT_{n}^{(0)}$.

Now for each $i\in\{m+2,\dots,n\}$, we consider the matrix
\begin{eqnarray}\nonumber
x_{1}^{(i)}=\sum_{k=1}^{n-i+m}y_{k}^{(i)}e_{k,k+i-m}.
\end{eqnarray}
with entries in $\K[Y]$.

Then
\begin{eqnarray}\nonumber 
f(x_{1}^{(i)},\bar{x}_{2},\dots,\bar{x}_{m})=\sum_{k=1}^{n-i+1}\bigg(y_{k}^{(i)}\sum_{\sigma\in S_{m}^{(1)}}\lambda_{\sigma}\alpha_{k+i-m}^{(\sigma(2))}\cdots \alpha_{k+i-2}^{(\sigma(m))}\\
+y_{k+1}^{(i)}\delta_{2}^{(i)}(\bar{x}_{2},\dots,\bar{x}_{m})+\cdots+ y_{k+m-1}^{(i)}\delta_{m}^{(i)}(\bar{x}_{2},\dots,\bar{x}_{m})\bigg)e_{k,k+i-1}
\end{eqnarray}

where $y_{k+j-1}^{(i)}\delta_{j}^{(i)}(\bar{x}_{2},\dots,\bar{x}_{m})$ denotes the $(k,m+k)$ entry of the matrix 
$$f_{j}(x_{1}^{(i)},\bar{x}_{2},\dots,\bar{x}_{m})$$  for $j=2,\dots,m.$ 

We claim that $\displaystyle\sum_{\sigma\in S_{m}^{(1)}}\lambda_{\sigma}\alpha_{k+i-m}^{(\sigma(2))}\cdots \alpha_{k+i-2}^{(\sigma(m))}$ is nonzero for $k\in\{1,\dots,n-i+1\}$. Indeed, we can rewrite this sum as $\displaystyle\sum_{\sigma \in {S_{m}^{(1)}}}\lambda_{\sigma}\alpha_{k+1}^{(\sigma(2))}\cdots \alpha_{k+m-1}^{(\sigma(m))},$ with $k\in\{i-m,\dots,n-m\}$.

Therefore, for each $i\in\{m+2,\dots,n\}$ and any given $(i)$-diagonal matrix $B_{i}=\displaystyle\sum_{k=1}^{n-i+1}b_{k}^{(i)}e_{k,k+i-1}\in UT_{n}^{(m-1)}$, a solution of the following linear system

\begin{eqnarray}\label{e3}
\scriptsize \left\{\begin{array}{ccc}
\displaystyle y_{1}^{(i)}\sum_{\sigma\in S_{m}^{(1)}}\lambda_{\sigma}\alpha_{i+1-m}^{(\sigma(2))}\cdots \alpha_{i-1}^{(\sigma(m))}+y_{2}^{(i)}\delta_{2}^{(i)}(\bar{x}_{2},\dots,\bar{x}_{m})+\cdots+ y_{m}^{(i)}\delta_{m}^{(i)}(\bar{x}_{2},\dots,\bar{x}_{m})&=&b_{1}^{(i)}\\
\displaystyle y_{2}^{(i)}\sum_{\sigma\in S_{m}^{(1)}}\lambda_{\sigma}\alpha_{i+2-m}^{(\sigma(2))}\cdots \alpha_{i}^{(\sigma(m))}+y_{3}^{(i)}\delta_{2}^{(i)}(\bar{x}_{2},\dots,\bar{x}_{m})+\cdots+ y_{m+1}^{(i)}\delta_{m}^{(i)}(\bar{x}_{2},\dots,\bar{x}_{m})&=&b_{2}^{(i)}\\
&\vdots &\\
\displaystyle y_{n-i+1}^{(i)}\sum_{\sigma\in S_{m}^{(1)}}\lambda_{\sigma}\alpha_{n-m+1}^{(\sigma(2))}\cdots \alpha_{n-1}^{(\sigma(m))}+y_{n-i+2}^{(i)}\delta_{2}^{(i)}(\bar{x}_{2},\dots,\bar{x}_{m})+\cdots+ y_{n-i+m}^{(i)}\delta_{m}^{(i)}(\bar{x}_{2},\dots,\bar{x}_{m})&=&b_{n-i+1}^{(i)}\\
\end{array}\right.
\end{eqnarray}

can be found recursively.

So, any $(i)$-diagonal matrix $B_{i}$ can be realized as $f(\bar{x}_{1}^{(i)},\bar{x}_{2},\dots,\bar{x}_{m})$, for some matrix $\bar{x}_{1}^{(i)}$ in $UT_{n}^{(0)}$.

Now given any matrix $B\in UT_{n}^{(m-1)}$, by (\ref{e}) we have
\begin{eqnarray}\nonumber
B=\sum_{i=m+1}^{n}B_{i}=\sum_{i=m+1}^{n}f(\bar{x}_{1}^{(i)},\bar{x}_{2},\dots,\bar{x}_{m})=f(\sum_{i=m+1}^{n} \bar{x}_{1}^{(i)},\bar{x}_{2},\dots,\bar{x}_{m})\in f(UT_{n}^{(0)}).
\end{eqnarray}

Therefore, $UT_{n}^{(m-1)}\subset f(UT_{n}^{(0)})$. Since the other inclusion is trivial, then we get the equality.

\section*{Acknowlegments}

I would like to thank my advisors Dr. Thiago de Mello and Dr. Mikhail Chebotar for the helpful comments and guidance. I also would like to thank the Department of Mathematical Sciences of Kent State University for its hospitality. 

\section*{Funding}

The author was supported by São Paulo Research Foundation (FAPESP), grants nº 2017/16864-5 and nº 2016/09496-7.

\end{document}